\newcolumntype{d}[1]{D{.}{.}{#1}}
\DeclareMathOperator*{\tr}{tr}
\DeclareMathOperator*{\rank}{rank}
\DeclareMathOperator*{\Fix}{Fix}
\DeclareMathOperator*{\diag}{diag}
\newcommand{\setto}{\rightrightarrows}
\newcommand{\angstrom}{{\mbox{\normalfont\AA}}}
\begin{document}

\author{Jonathan M. Borwein \and Matthew K. Tam}
\title{Reflection methods for inverse problems with applications to protein conformation determination}
\titlerunning{Reflection methods for inverse problems}
\authorrunning{J.M. Borwein \and M.K. Tam}
\institute{J.M. Borwein \at CARMA Centre, University of Newcastle, Callaghan, NSW 2308, Australia. \email{jon.borwein@gmail.com}
          \and
          M.K. Tam \at CARMA Centre, University of Newcastle, Callaghan, NSW 2308, Australia. \email{matthew.tam@uon.edu.au}}

\maketitle

\abstract{The Douglas--Rachford reflection method is a general purpose algorithm useful for solving the feasibility problem of finding a point in the intersection of finitely many sets. In this chapter we demonstrate that applied to a specific problem, the method can benefit from heuristics specific to said problem which exploit its special structure. In particular, we focus on the problem of protein conformation determination formulated within the framework of matrix completion, as was considered in a recent paper of the present authors.
\keywords{reflection methods; inverse problems; protein conformation}
}

\section{Techniques of Variational Analysis}

This chapter builds on a series of seven lectures titled \emph{Techniques of Variational Analysis} given by the first author at the CIMPA school \emph{Generalized Nash Equilibrium Problems, Bilevel Programming and MPEC} held November 25 to December 6, 2013, University of Delhi, New Delhi, India. In this written presentation we focus on \emph{reflection methods} for \emph{protein conformation determination}, as was discussed in the seventh and final lecture of the series. The complete lectures --- one through six taken from \cite{tova} --- can be found online at:
 \vspace{-1.5ex}\begin{center}
  \url{http://www.carma.newcastle.edu.au/jon/ToVA/links.html}
 \end{center}

Before turning our attention to reflection methods, we briefly outline the content of the first six lectures.
 \begin{itemize}
 \item \textbf{Lectures 1 \& 2} provided an introduction to variational analysis and variational principles \cite[\S1-\S2]{tova}.
 \item \textbf{Lectures 3 \& 4} introduced nonsmooth analysis: normal cones and subdifferentials of lower semi-continuous functions, Fr\'echet and limiting calculus \cite[\S3.1-\S3.4]{tova}, and discussed convex functions and their calculus
rules \cite[\S4.1-\S4.4]{tova}.
 \item \textbf{Lecture 5} turned to multifunction analysis: sequences of sets, continuity of maps, minimality and maximal monotonicity, and distance functions \cite[\S5.1-\S5.3]{tova}.
 \item \textbf{Lecture 6} focussed on convex feasibility problems and the method of alternating projections \cite[\S4.7]{tova}, and therefore providing the preliminary background for the rest of this chapter.
 \end{itemize}

\section{Introduction to Reflection Methods}

Given a (finite) family of sets, the corresponding \emph{feasibility problem} is to find a point contained in their intersection. \emph{Douglas--Rachford reflection methods} form a class of general-purpose iterative algorithms which are useful for solving such problems. At each iteration, these methods perform \emph{(metric) reflections} and \emph{(metric/nearest point) projections} with respect to the individual constraint sets in a prescribed fashion. Such methods are most useful when applied to feasibility problems whose constraint sets have more easily computable reflections and projections than does the intersection.

When the underlying constraint sets are all convex, Douglas--Rachford methods are relatively well understood \cite{BCL04finding,BT14cyclic,BT13cyclic2,BNP14linear} --- their behaviour can be analysed using nonexpansivity properties of convex projections and reflections. In the absence of convexity, recent result have assumed the constraint sets to possess other structural and regularity properties \cite{BS11dr,AB13dr,HL13nonconvex}. However, at present, this developing theoretical foundation is not sufficiently rich to explain many of the successful applications in which one or more of the constraint sets lacks convexity \cite{ABT14matrix,ABT04comb,ERT07searching,GE08divide}. In these cases, the method can be viewed as a heuristic inspired by its behaviour within fully convex settings.

More generally, with any algorithm there is typically a trade-off between the scope of their applicability and tailoring of performance to particular instances. Douglas--Rachford reflection methods are no different. Owing to these methods' broad applicability, potential for further problem specific refinements when applied to special classes of feasibility problems is possible.

In this chapter, we investigate and develop one such refinement with a focus on application of the Douglas--Rachford method to \emph{protein conformation determination}. This application was previously considered as part of \cite{ABT14matrix}. We now propose problem specific heuristics, and also study the effect of increasing problem size. We finish by demonstrating a complementary application of the approach arising in the context of \emph{ionic liquid chemistry}.

The remainder of this chapter is organized as follows. In Sections~\ref{s:prelim}, \ref{s:edm}, \ref{s:DR} \& \ref{s:protein} we introduce the necessary mathematical preliminaries along with the Douglas--Rachford reflection method, before formulating the protein conformation determination problem. Substantial numerical and graphical results are given in Section~\ref{s:results}, and concluding remarks in Section~\ref{s:conclusion}.

\section{Mathematical Preliminaries}\label{s:prelim}
Let $\mathbb{E}$ denote a Euclidean space, that is, a finite dimensional Hilbert space. We will mainly be concerned with the space $\mathbb{R}^{m\times m}$ (i.e., real $m\times m$ matrices) equipped with the inner-product given by
 $$\langle A,B\rangle:=\tr(A^TB).$$
Here the symbol $\tr(X)$ (resp. $X^T$) denotes the trace (resp. transpose) of the matrix $X$. The induced norm is the \emph{Frobenius norm} and can be expressed as
 $$\|A\|_F:=\sqrt{\tr(A^TA)}=\sqrt{\sum_{i=1}^m\sum_{j=1}^ma_{ij}^2}.$$
The subspace of real symmetric $m\times m$ matrices is denoted $S^m$, and the cone of positive semi-definite $m\times m$ matrices by $S^m_+$.

Given sets $C_1,C_2,\dots,C_N\subseteq\mathbb{E}$, the \emph{feasibility problem} is
\begin{equation}\label{eq:feasibility}
 \text{find~}x\in\bigcap_{i=1}^NC_i.
\end{equation}
When the intersection in \eqref{eq:feasibility} is empty, one often seeks a ``good" surrogate for a point in the intersection. When $N=2$, a useful surrogate is a pair of points, one from each set, which minimize the distance between the sets -- a \emph{best approximation pair} \cite{BCL04finding}.

\section{Matrix Completion}\label{s:edm}

A \emph{partial (real) matrix} is an $m\times m$ array for which entries only in certain locations are known. Given a partial matrix $A=(a_{ij})\in\mathbb{R}^{m\times m}$, a matrix $B=(b_{ij})\in\mathbb{R}^{m\times m}$ is a \emph{completion} of $A$ if $b_{ij}=a_{ij}$ whenever $a_{ij}$ is known. The problem of \emph{(real) matrix completion} is the following: \emph{Given a partial matrix find a completion belonging to a specified family of matrices}.

Matrix completion can be naturally formulated as a feasibility problem. Let $A$ be the partial matrix to be completed. Choose $C_1,C_2,\dots,C_N$ such that their intersection is equal to the intersection of completions of $A$  with the specified matrix family. Then (\ref{eq:feasibility}) is precisely the problem of matrix completion for $A$. The simplest such case is when $C_1$ is the set of all completions of $A$ and the intersection of $C_2,\dots,C_N$ equals the desired matrix class.

\begin{remark}
 More generally, one may profitably consider matrix completion for rectangular matrices \cite{ABT14matrix}, for example with doubly stochastic matrices. However, since the partial matrices in the discussed protein application are always square, for the purposes of this discussion, we only concern ourselves with the square case.
\end{remark}

\section{The Douglas--Rachford Reflection Method}\label{s:DR}
The \emph{projection onto $C\subseteq\mathbb{E}$} is the set-valued mapping $P_C:\mathbb{E}\setto C$ which maps any point $x\in \mathbb{E}$ to its sets of nearest points in $C$. More precisely,
 \begin{equation*}
  P_C(x)=\left\{c\in C:\|x-c\|\leq\inf_{y\in C}\|x-y\|\right\}.
 \end{equation*}
The \emph{reflection with respect to $C$} is the set-valued mapping $R_C:\mathbb{E}\setto\mathbb{E}$ given by $ R_C=2P_C-I$, where $I$ denotes the identity mapping.

When $C$ is non-empty, closed, and convex, its corresponding projection operator (and hence its reflection) is single-valued (see, for example, \cite[Ch.~1.2]{C12iterative}).

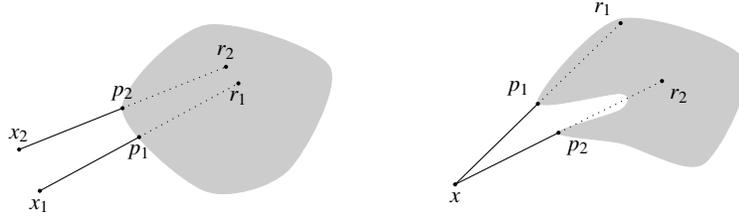
\begin{figure}
 \begin{center}
 \begin{minipage}{0.48\textwidth}
   \begin{center}
    \begin{tikzpicture}[scale=0.55]
      \filldraw [black!20] plot [smooth cycle] coordinates
         {(0,0) (2,0.5) (3,3) (0,4) (-2,2)};
      \fill (-4,0) circle (0.05) node[below] {$x_1$};
      \draw (-1.6,1.3) -- (-4,0);
      \fill (-1.6,1.3) circle (0.05) node[below]  {$p_1$};
      \fill[black] (0.8,2.6) circle (0.05) node[below] {$r_1$};
      \draw[dotted,black] (-1.6,1.3) -- (0.8,2.6);
      \fill (-4.5,1) circle (0.05) node[above] {$x_2$};
      \draw (-2.0,2) -- (-4.5,1);
      \fill (-2.0,2) circle (0.05) node[above]  {$p_2$};
      \fill[black] (0.5,3) circle (0.05) node[above] {$r_2$};
      \draw[dotted,black] (-2.0,2) -- (0.5,3);
      \draw (0,4.8) node {};
    \end{tikzpicture}
   \end{center}
 \end{minipage}
 \begin{minipage}{0.48\textwidth}
   \begin{center}
   \begin{tikzpicture}[scale=0.55]
     \filldraw [black!20] plot [smooth cycle] coordinates
        {(0,1) (2,0.5) (3,3) (0,4) (-2,2) (0,2.2)  (0,1.8) (-1.5,1.25)};
     \fill (-4,0) circle (0.05) node[below] {$x$};
     \fill (-2,1.95) circle (0.05) node[above left]  {$p_1$};
     \fill (-1.5,1.25) circle (0.05) node[below right]  {$p_2$};
     \draw (-4,0) -- (-2,1.95);
     \draw (-4,0) -- (-1.5,1.25);
     \draw[dotted,black] (-2,1.95) -- (0,3.9);
     \draw[dotted,black] (-1.5,1.25) -- (1,2.5);
     \fill[black] (0,3.9) circle (0.05) node[above left] {$r_1$};
     \fill[black] (1,2.5) circle (0.05) node[below right] {$r_2$};
   \end{tikzpicture}
   \end{center}
 \end{minipage}
 \caption{(Left) The (single-valued) projection, $p_i$, and reflection, $r_i$, of the point $x_i$ onto a convex set, for $i=1,2$. (Right) The (set-valued) projection, $\{p_1,p_2\}$, and reflection, $\{r_1,r_2\}$, of the point $x$ onto a non-convex set. Note the non-expansivity of the reflection in the convex case. }
 \end{center}
\end{figure}

Given $A,B\subseteq\mathbb{E}$ and $x_0\in\mathbb{E}$, the \emph{Douglas--Rachford reflection method} is the fixed point iteration given by
 \begin{equation}\label{eq:iterationDR}
  x_{n+1}\in T_{A,B}x_n\text{~where~}T_{A,B}=\frac{I+R_{B}R_{A}}{2}.
 \end{equation}
We refer to the sequence $(x_n)_{n=1}^\infty$ as a \emph{Douglas--Rachford sequence}, and to the mapping $T_{A,B}$ as the \emph{Douglas--Rachford operator}.

We now recall the behavior of the Douglas--Rachford method in the classical convex setting. In this case, $T_{A,B}$ is single-valued as a consequence of the single-valuedness of each of $P_A,P_B,R_A$ and $R_B$. We denote the set of \emph{fixed points} of a single-valued mapping $T$ by $\Fix T=\{x\in\mathbb{E}:Tx=x\}$, and the \emph{normal cone} of a convex set $C$ at the point $x$ by
  \begin{equation*}
   N_C(x) = \begin{cases}
             \{y\in\mathbb{E}:\langle C-x,u\rangle\leq 0\} & \text{if }x\in C,\\
             \emptyset & \text{otherwise.} \\
            \end{cases}
  \end{equation*}
 For convenience, we also introduce the two sets
  \begin{align*}
   E&=\left\{x\in A:\inf_{a\in A}\|a-x\|\leq \inf_{a\in A,b\in B}\|a-b\|\right\},\\
   F&=\left\{x\in B:\inf_{b\in B}\|x-b\|\leq \inf_{a\in A,b\in B}\|a-b\|\right\},
  \end{align*}
 and the vector $v=P_{\overline{B-A}}(0)$. Here the overline denotes the closure of the set.
\begin{theorem}[Convex Douglas--Rachford in finite dimensions {\cite{BCL04finding}}]\label{th:convexDR}
 Suppose ${A,B\subseteq\mathbb{E}}$ are closed and convex. For any $x_0\in\mathbb{E}$ define $x_{n+1}=T_{A,B}x_n$.  Then there is some $v \in\mathbb{E}$ such that:
 \begin{enumerate}[(i)]
  \item $x_{n+1}-x_n=P_BR_Ax_n-P_Ax_n\to v$ and $P_BP_Ax_n-P_Ax_n\to v$.
  \item If $A\cap B\neq\emptyset$ then $(x_n)_{n=1}^\infty$ converges to a point in $$\Fix(T_{A,B})=(A\cap B)+N_{\overline{A-B}}(0);$$ otherwise, $\|x_n\|\to+\infty$.
  \item Exactly one of the following two alternatives holds.
   \begin{enumerate}[(a)]
    \item $E=\emptyset$, $\|P_Ax_n\|\to+\infty$, and $\|P_BP_Ax_n\|\to+\infty$.
    \item $E\neq\emptyset$, the sequences $(P_Ax_n)_{n=1}^\infty$ and $(P_BP_Ax_n)_{n=1}^\infty$ are bounded, and their cluster points belong to $E$ and $F$, respectively; in fact, the cluster points of
     \begin{equation*}
      ((P_Ax_n,P_BR_Ax_n))_{n=1}^\infty \text{ and }  ((P_Ax_n,P_BP_Ax_n))_{n=1}^\infty
     \end{equation*}
    are a best approximation pairs relative to $(A,B)$.
   \end{enumerate}
 \end{enumerate}
\end{theorem}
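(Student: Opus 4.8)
\medskip
\noindent\textbf{Proof proposal.}
The plan is to recast the whole statement as an instance of the asymptotic theory of \emph{firmly nonexpansive} operators, specialised to finite dimensions (where weak and strong convergence agree and bounded sequences are precompact). The first step is to record that $T:=T_{A,B}=(I+R_BR_A)/2$ is firmly nonexpansive: since $A,B$ are closed convex, $P_A,P_B$ are firmly nonexpansive, so $R_A,R_B$ are nonexpansive, hence so is the composition $R_BR_A$, and an average of the identity with a nonexpansive map is firmly nonexpansive. I would then assemble three ingredients: (a) the displacement theorem, stating that for firmly nonexpansive $T$ the differences $(I-T)x_n$ converge to the unique minimal-norm element of $\overline{\operatorname{ran}(I-T)}$; (b) the classical fact that iterates of a firmly nonexpansive map with a fixed point converge to a fixed point; and (c) normal-cone calculus to identify the relevant limit sets.

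For part (i), I would first verify the algebraic identities $x_{n+1}-x_n=(T-I)x_n=P_BR_Ax_n-P_Ax_n$ and, dually, $(I-T)x=P_Ax-P_BR_Ax$, by expanding $R_A=2P_A-I$ and $R_B=2P_B-I$. Since $P_Ax\in A$ and $P_BR_Ax\in B$, this shows $\operatorname{ran}(I-T)\subseteq A-B$, so the minimal-norm element of $\overline{\operatorname{ran}(I-T)}$ has norm at least $d(A,B)=\|v\|$; the matching reverse estimate---that points of the form $(I-T)x$ actually approach $-v$---is the technical core, obtained by feeding a minimising sequence for the gap through the construction of a preimage $x$ with prescribed projections. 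Ingredient (a) then gives $x_{n+1}-x_n\to v$. For the companion limit $P_BP_Ax_n-P_Ax_n\to v$ I would note that this sequence lies in $B-A$ with norm $d(P_Ax_n,B)\ge\|v\|$; once one knows (from the analysis of (iii) below) that this norm tends to $\|v\|$, uniqueness of the minimal-norm element of the closed convex set $\overline{B-A}$ (minimising sequences converge in Hilbert space) forces convergence to $v$. Thus (i) quietly borrows from (iii).

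For part (ii), the fixed-point set is computed by unwinding $R_BR_Ax=x$ through the projection characterisation $P_C(y)=c\iff c\in C,\ y-c\in N_C(c)$: writing $a=P_Ax$, the equation forces $a\in A\cap B$ with $x-a\in N_A(a)$ and $x-a\in-N_B(a)$, and a short normal-cone computation shows $N_A(a)\cap(-N_B(a))=N_{\overline{A-B}}(0)$ independently of the chosen $a$, giving $\Fix T=(A\cap B)+N_{\overline{A-B}}(0)$. In particular $\Fix T\neq\emptyset$ iff $A\cap B\neq\emptyset$. When $A\cap B\neq\emptyset$, ingredient (b) delivers convergence of $(x_n)$ to a fixed point. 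When $A\cap B=\emptyset$ I would argue by contradiction: asymptotic regularity gives $Tx_n-x_n\to v$, so any bounded subsequence would cluster at a point $\bar x$ with $T\bar x-\bar x=v$; if $v=0$ this says $\bar x\in\Fix T$, contradicting $A\cap B=\emptyset$, while if $v\neq0$ then $x_{n+1}-x_n\to v\neq0$ already forces $\|x_n\|\to\infty$. Either way $\|x_n\|\to+\infty$.

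Part (iii) is where the real work lies, and I expect the boundedness dichotomy to be the main obstacle. The easy direction is that if $(P_Ax_n)$ is bounded then, extracting a cluster point $a^\ast=\lim P_Ax_{n_k}$ and using $P_BR_Ax_n-P_Ax_n\to v$, the pair $(a^\ast,a^\ast+v)$ lies in $A\times B$ with $\|a^\ast-(a^\ast+v)\|=\|v\|=d(A,B)$, hence is a best approximation pair, so $a^\ast\in E$ and its partner lies in $F$; the same argument applies to $(P_BP_Ax_n)$. The hard part is the equivalence $E\neq\emptyset\iff(P_Ax_n)$ bounded. For the forward implication I would fix a best approximation pair and establish a Fej\'er-type quasi-monotonicity of the shadow sequence relative to it, exploiting firm nonexpansivity of $T$ together with the fact that translating by $v$ converts $T$ into a map whose iterates track a genuine fixed-point iteration, thereby controlling $\|P_Ax_n\|$. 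For the converse I would show that if the gap is unattained ($E=\emptyset$) then no cluster point of $(P_Ax_n)$ can exist, since any such point would attain the gap, forcing $\|P_Ax_n\|\to+\infty$ and $\|P_BP_Ax_n\|\to+\infty$. Transferring boundedness information cleanly between the driving sequence $(x_n)$ and the shadow sequences in the total absence of a fixed point is the step I anticipate requiring the most care.
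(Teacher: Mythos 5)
First, a caveat: the paper itself contains no proof of Theorem~\ref{th:convexDR}; the result is quoted verbatim from Bauschke--Combettes--Luke \cite{BCL04finding}, so your proposal can only be measured against that cited source. Measured so, your architecture is the right one and is essentially theirs: firm nonexpansivity of $T_{A,B}$ as the average of $I$ with the nonexpansive map $R_BR_A$; the Baillon--Bruck--Reich displacement theorem identifying $\lim_n(I-T)x_n$ with the minimal-norm element of $\overline{\operatorname{ran}}(I-T)$; the identity $I-T=P_A-P_BR_A$ giving $\operatorname{ran}(I-T)\subseteq A-B$, with the reverse identification via a minimizing sequence $a_k\in A$, for which $(I-T)a_k=a_k-P_Ba_k$; the normal-cone computation $\Fix T=(A\cap B)+N_{\overline{A-B}}(0)$ (your computation $N_A(a)\cap(-N_B(a))=N_{\overline{A-B}}(0)$ for $a\in A\cap B$ is correct); and a Fej\'er-type argument for the dichotomy. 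Your finite-dimensional compactness arguments for the two divergence statements are legitimate simplifications of the Hilbert-space arguments in the source, and are appropriate here since the theorem is stated in $\mathbb{E}$.

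There are, however, two genuine gaps. (1) Your treatment of the companion limit $P_BP_Ax_n-P_Ax_n\to v$ in (i) does not work as stated: you defer the needed norm convergence to (iii), but (iii) yields norm information only through cluster points, i.e.\ only in case (b); in case (a), where $E=\emptyset$ and the shadows diverge, your (iii) gives nothing, yet the companion limit must still hold. The repair is a one-line squeeze that avoids (iii) entirely: since $P_BR_Ax_n\in B$, one has $\|v\|\leq\|P_BP_Ax_n-P_Ax_n\|\leq\|P_BR_Ax_n-P_Ax_n\|\to\|v\|$, and then, exactly as you say, strong convergence of minimizing sequences to the unique minimal-norm element of the closed convex set $\overline{B-A}$ finishes the claim. (2) The forward implication of (iii)(b) --- $E\neq\emptyset$ implies bounded shadows --- which you yourself flag as the hard step, is only gestured at (``Fej\'er-type quasi-monotonicity'', ``translating by $v$ \dots''). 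The concrete mechanism your sketch never pins down is the orbit identity: for $a\in E$, the variational characterization of $v=P_{\overline{B-A}}(0)$ gives $P_A(a+nv)=a$ and $P_BR_A(a+nv)=a+v$ for every $n\geq 0$, whence $T(a+nv)=a+(n+1)v$, i.e.\ $T^na=a+nv$. Nonexpansivity of $T^n$ then yields $\|x_n-(a+nv)\|\leq\|x_0-a\|$, and nonexpansivity of $P_A$ and of $P_BR_A$ transfers this bound to the shadows: $\|P_Ax_n-a\|\leq\|x_0-a\|$ and $\|P_BR_Ax_n-(a+v)\|\leq\|x_0-a\|$. Without this (or an equivalent) computation the dichotomy is not proved; with it, and with the squeeze above, your proposal closes.
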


\begin{figure}
 \begin{center}
 \begin{tikzpicture}[scale=0.9]
   \fill[black!20] (0,0) circle (1.75cm);
   \draw[<->,black!75,thick] (-4,0.66) -- (4,0.66);
   \fill (2.625,-1) node (x) {} circle (0.03) node[below left] {$x_n$};
   \fill (0.646,-0.246) node (Rx) {} circle (0.03) node[left] {$R_{A}x_n$};
   \draw[->,dotted] (x) -- (Rx);
   \fill (0.646,1.566) node (RRx) {} circle (0.03) node[left] {$R_{B}R_{A}x_n$};
   \draw[->,dotted] (Rx) -- (RRx);
   \fill (1.6355,0.283) node (Tx) {} circle (0.03) node[right] {$x_{n+1}=T_{A,B}x_n$};
   \draw[->] (x) -- (Tx);
   \draw (-1,-1) node {$A$};
   \draw (-2.5,0.66) node[above] {$B$};
 \end{tikzpicture}
 \caption{One iteration of the Douglas--Rachford method for the sets $A = \{x\in\mathbb{E}:\|x\|\leq 1\}$ and $B = \{x\in\mathbb{E}:\langle a,x\rangle=b\}$.}
  \end{center}
\end{figure}
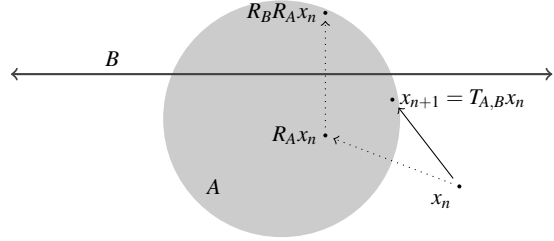

Theorem~\ref{th:convexDR} provides the template for application of the Douglas--Rachford method as a heuristic for non-convex feasibility problems. Furthermore, this theorem also shows that for the Douglas--Rachford method the sequence of primary interest is not the fixed point iterates $(x_n)_{n=1}^\infty$ themselves, but their \emph{shadows} $(P_Ax_n)_{n=1}^\infty$.

\begin{remark}[Douglas--Rachford splitting]
 The Douglas--Rachford reflection method can be viewed as a special case of the \emph{Douglas--Rachford splitting algorithm} for finding a zero of the sum of two maximally monotone operators. This more general splitting method iterates by using \emph{resolvents} of the given maximally monotone operators rather than projection operators of sets. The reflection method is obtained in the special case in which the maximal monotone operators are normal cones to the feasibility problem sets. For details, we refer the reader to \cite{BC10convex}.
\end{remark}

Within an implementation of the Douglas--Rachford method, computation of the projection operators are the component typically requiring the most resources. It is therefore beneficial to store two additional sequences in memory; the \emph{shadow sequence} $(P_Ax_n)_{n=1}^\infty$, and the sequence $(P_BR_Ax_n)_{n=1}^\infty$. This is because iteration \eqref{eq:iterationDR} is expressible as
 \begin{equation}\begin{split}
  x_{n+1} &\in x_n+P_{B}R_{A}x_n-P_{A}x_n\\ &=x_n+P_{B}(2P_{A}x_n-x_n)-P_{A}x_n.
 \end{split}\end{equation}
An implementation utilizing this approach is given in Algorithm~\ref{alg:basicDR}. The stopping criterion uses a relative error and is discussed in Section~\ref{s:results}.

\begin{figure}
\caption{Implementation of the basic Douglas--Rachford algorithm.}\label{alg:basicDR}
\begin{svgraybox}
 \begin{algorithm}[H]
  \KwIn{$x_0\in\mathbb{E}$ and $\epsilon>0$}
  $n=0$\;
  $p_0 \in P_{A}(x_0)$\;
  \While{$n=0${\rm\bf~or~}$\|r_n-p_n\|>\epsilon\|p_n\|$}{
   $r_{n} \in P_{B}(2p_n-x_n)$\;
   $x_{n+1} = x_n+r_n-p_{n}$\;
   $p_{n+1} \in P_{A}(x_{n+1})$\;
   $n = n+1$\;
  }
  \KwOut{$p_n\in\mathbb{E}$}
 \end{algorithm}
\end{svgraybox}
\end{figure}

\section{Protein Conformation Determination}\label{s:protein}
 Proteins are large biomolecules which are comprised of multiple amino acid residues,\footnote{When two amino acids form a peptide bond, a water molecule is formed. An \emph{amino acid residue} is what remains of each amino acid after this reaction.} each of which typically consists of between $10$ and $25$ atoms. 
  Proteins participate is virtually every cellar process, and knowledge of their structural conformation gives insight into the mechanisms by which they perform.

 One of many techniques that can be used to determine conformation is \emph{nuclear magnetic resonance (NMR)}. Currently NMR is only able to non-destructively resolve relatively short distances ({\em i.e.,} those less than $\sim 6$\angstrom). In the proteins we consider, this corresponds to less than $9$\% of all non-zero inter-atom distances.

 We now formulate the problem of protein conformation determination as a computationally tractable matrix completion problem. In fact, our formulation is a \emph{low-rank Euclidean distance matrix completion problem}. We next introduce the necessary definitions.


 We say that a matrix $D=(D_{ij})\in\mathbb{R}^{m\times m}$ is a \emph{Euclidean distance matrix (EDM)} if there exists points $z_1,z_2,\ldots, z_n\in \mathbb{R}^m$ such that
 \begin{equation}\label{eq:EDMentries}
  D_{ij}=\|z_i-z_j\|^2\text{ for }i,j=1,2,\dots,m.
 \end{equation}
Clearly any EDM is symmetric, non-negative, and hollow ({\em i.e.,} contains only zeros along its main diagonal). When~\eqref{eq:EDMentries} holds for a set of points in $\mathbb{R}^q$, we say $D$ is  \emph{embeddable} in $\mathbb{R}^q$. If $D$ is embeddable in $\mathbb{R}^q$ but not in $\mathbb{R}^{q-1}$, then we say that $D$ is \emph{irreducibly embeddable} in $\mathbb{R}^q$.

 We now recall a useful characterization of EDMs, due to Hayden and Wells \cite{HW88approx}. In what follows, the matrix $Q\in\mathbb{R}^{m\times m}$ is the \emph{Householder matrix} given by
   \begin{equation*}
    Q=I-\frac{2vv^T}{v^Tv},\text{ where }v=\begin{bmatrix}1,&1,&\dots &1,&1+\sqrt{m}\end{bmatrix}^T\in\mathbb{R}^m.
   \end{equation*}
\begin{theorem}[EDM characterization {\cite[Th.~3.3]{HW88approx}}]\label{th:EDMchar}
 A non-negative, symmetric, hollow matrix $X\in\mathbb{R}^{m\times m}$ is a Euclidean distance matrix if and only if the block $\widehat{X}\in\mathbb{R}^{(m-1)\times(m-1)}$ in
  \begin{equation}\label{eq:EDMchar}
   Q(-X)Q=\begin{bmatrix}
   \widehat{X} & d \\
   d^T         & \delta \\
  \end{bmatrix}
  \end{equation}
 is positive semi-definite. In this case, $X$ is irreducibly embeddable in $\mathbb{R}^q$ where ${q=\rank(\widehat{X})\leq m-1}$.
\end{theorem}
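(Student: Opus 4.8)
The plan is to route the whole equivalence through the classical Schoenberg criterion and to recognise the Householder matrix $Q$ as nothing more than the orthogonal change of basis that carries the hyperplane $e^\perp$ onto a coordinate subspace. Throughout, let $e=[1,\dots,1]^T\in\mathbb{R}^m$ be the all-ones vector, $e_m$ the $m$-th standard basis vector, and $e^\perp=\{u\in\mathbb{R}^m:e^Tu=0\}$. The bridging criterion I would use, and prove inline, is: a symmetric hollow $X$ is an EDM if and only if $u^TXu\le 0$ for every $u\in e^\perp$, equivalently $-X$ is positive semidefinite on $e^\perp$; moreover the embedding dimension equals the rank of this restricted form.

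First I would establish the single property of $Q$ that does all the work. Writing $v=e+\sqrt{m}\,e_m$, one computes $v^Tv=2\sqrt{m}(\sqrt{m}+1)$ and $v^Te=\sqrt{m}(\sqrt{m}+1)$, whence $Qe=e-v=-\sqrt{m}\,e_m$. Since $Q$ is symmetric and orthogonal ($Q^2=I$), it maps $e^\perp$ bijectively onto $e_m^\perp=\mathbb{R}^{m-1}\times\{0\}$. Setting $B:=Q(-X)Q$ and substituting $u=Qw$, we get $u^T(-X)u=w^TBw$ as $u$ ranges over $e^\perp$ and $w=Qu$ over $e_m^\perp$; and for $w=(\hat w,0)$ the value $w^TBw=\hat w^T\widehat{X}\hat w$ depends only on the leading block. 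Hence $-X$ is positive semidefinite on $e^\perp$ exactly when $\widehat{X}\in S^{m-1}_+$, which is the heart of the stated equivalence.

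It remains to prove the Schoenberg criterion. For the forward direction, if $X_{ij}=\|z_i-z_j\|^2$ and $e^Tu=0$, expanding $\|z_i-z_j\|^2=\|z_i\|^2-2z_i^Tz_j+\|z_j\|^2$ makes the two squared-norm sums vanish (each carries a factor $\sum_i u_i=0$), leaving $u^TXu=-2\|\sum_i u_iz_i\|^2\le 0$. For the reverse direction --- the substantive one --- I would form the doubly centred matrix $B_0=-\tfrac12 JXJ$ with $J=I-\tfrac1m ee^T$. Because $Jy\in e^\perp$ for every $y$, the hypothesis gives $y^TB_0y=\tfrac12(Jy)^T(-X)(Jy)\ge 0$, so $B_0\in S^m_+$ and $B_0=Z^TZ$ for some $Z$ with $\rank Z=\rank B_0$. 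Taking the columns $z_1,\dots,z_m$ of $Z$ as candidate points, a short computation using hollowness ($X_{ii}=0$) yields $(B_0)_{ii}+(B_0)_{jj}-2(B_0)_{ij}=X_{ij}$, i.e.\ $\|z_i-z_j\|^2=X_{ij}$, so $X$ is an EDM.

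Finally, for the embedding statement: in the $Q$-rotated basis the restriction of $-X$ to $e^\perp$ is literally $\widehat{X}$, so $\rank\widehat{X}$ equals $\rank B_0=\rank Z=:q$ (here $J$ acts as the identity on $e^\perp$ and $B_0$ annihilates $e$, so no rank is lost). Since $Je=0$ forces $B_0e=0$ and hence $\sum_i z_i=0$, the affine hull of the $z_i$ coincides with their linear span, of dimension exactly $q\le m-1$; therefore $X$ is irreducibly embeddable in $\mathbb{R}^q$. I expect the main obstacle to lie in the reverse half of Schoenberg's criterion: choosing the centering correctly, carrying out the bookkeeping that recovers $X_{ij}$ from $B_0$, and --- more delicately --- showing the embedding dimension is \emph{exactly} $\rank\widehat{X}$ rather than merely a bound, which requires the centroid-at-origin observation to equate the affine and linear hulls. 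The Householder algebra, once $Qe=-\sqrt{m}\,e_m$ is identified, is entirely routine.
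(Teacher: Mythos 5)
The paper offers no proof of this theorem to compare against: it is quoted directly from Hayden and Wells \cite[Th.~3.3]{HW88approx} and used as a black box. Your argument must therefore stand on its own, and it essentially does. Both pillars are sound: (i) the Householder computation $v^Tv=2\sqrt{m}(\sqrt{m}+1)$, $v^Te=\sqrt{m}(\sqrt{m}+1)$, hence $Qe=e-v=-\sqrt{m}\,e_m$, which exhibits $\widehat{X}$ as the matrix of the quadratic form $-X$ restricted to $e^\perp$ in the orthonormal basis $Qe_1,\dots,Qe_{m-1}$; and (ii) Schoenberg's criterion, proved via $B_0=-\tfrac12 JXJ$, where your identity $(B_0)_{ii}+(B_0)_{jj}-2(B_0)_{ij}=X_{ij}$ (valid because the row-mean, column-mean and grand-mean terms cancel and $X_{ii}=X_{jj}=0$) recovers the squared distances from the factorization $B_0=Z^TZ$. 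The rank bookkeeping is also right: since $J=UU^T$ with $U=[Qe_1,\dots,Qe_{m-1}]$, one has $J(-X)J=U\widehat{X}U^T$, so $\rank\widehat{X}=\rank B_0$. This route is in fact close in spirit to Hayden--Wells' original argument, which likewise uses $Q$ to carry $e^\perp$ onto a coordinate hyperplane; what your write-up buys is a self-contained proof of a statement the paper only cites.

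One step deserves tightening: irreducibility. Your centroid observation ($B_0e=0\Rightarrow\sum_iz_i=0$) shows that the \emph{constructed} realisation has affine hull of dimension exactly $q=\rank\widehat{X}$, so $X$ embeds in $\mathbb{R}^q$; but it does not by itself exclude some \emph{other} embedding in $\mathbb{R}^{q-1}$, which is what ``irreducibly embeddable'' requires. You can close this with a line you already have all the tools for: if $X_{ij}=\|w_i-w_j\|^2$ for any points $w_i\in\mathbb{R}^p$ with coordinate matrix $W$, then expanding $X=\diag(G)e^T+e\,\diag(G)^T-2G$ with $G=W^TW$ and using $Je=0$ gives $B_0=-\tfrac12 JXJ=(WJ)^T(WJ)$, whence $q=\rank B_0=\rank(WJ)\leq p$. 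Since $B_0$ is built from $X$ alone, independently of the embedding, this shows every realisation needs at least $q$ dimensions, completing the claim.
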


The problem of \emph{low-rank Euclidean distance matrix completion} can now be formulated. Let $D$ denote a partial Euclidean distance matrix, with entry $D_{ij}$ known whenever $(i,j)\in\Omega$ for some index set $\Omega$, which is embeddable in $\mathbb{R}^q$. Without loss of generality, we make the following three simplifying assumptions on the partial matrix $D$ and index set $\Omega$.
 \begin{enumerate}
  \item (non-negative) $D\geq 0$ ({\em i.e.,} $D_{ij}\geq 0$ for all $i,j=1,2,\dots,m$);
  \item (hollow) $D_{ii}=0$ and $(i,i)\in\Omega$ for $i=1,2,\dots,m$;
  \item (symmetric) $(i,j)\in\Omega\iff (j,i)\in\Omega$, and $D_{ij}=D_{ji}$ for all $(i,j)\in\Omega$.
 \end{enumerate}
We define two constraint sets
\begin{equation}\label{eq:lowRankEDM}\begin{split}
 C_1 &= \left\{X\in S^m:X\geq 0,\,X_{ij}=D_{ij}\text{ for all }(i,j)\in\Omega\right\},\\
 C_2 &= \left\{X\in S^m:Q(-X)Q=\begin{bmatrix}
                                              \widehat{X} & d \\
                                              d^T         & \delta \\
                                            \end{bmatrix},\,
                                            \begin{array}
                                            {ll}\widehat{X}\in S_+^{m-1},& d\in\mathbb{R}^{m-1}\\[0.25ex]  \rank\widehat{X}\leq q,& \delta\in\mathbb{R}\\ \end{array}\right\}.
\end{split}\end{equation}
In light of Theorem~\ref{th:EDMchar}, the problem of \emph{low-rank Euclidean distance matrix completion} can be cast as the two-set feasibility problem
 \begin{equation*}
  \text{find~}X\in C_1\cap C_2.
 \end{equation*}
That is, a matrix $X$ is a low-rank Euclidean distance matrix which completes $D$ if and only if $X\in C_1\cap C_2$. Some comments regarding the constraint sets in \eqref{eq:lowRankEDM} are in order.

 The set $C_1$ encodes the experimental data obtained from NMR, and the \emph{a priori} knowledge that the matrix is non-negative, symmetric and hollow. Its projection has a simple formulae, as we now show.
 \begin{proposition}[Projection onto $C_1$]\label{prop:PC1}
  Let $X\in\mathbb{R}^{m\times m}$. Then $P_{C_1}X$ is given element-wise by
   \begin{equation*}
    (P_{C_1}X)_{ij} = \begin{cases}
                        D_{ij}, & (i,j)\in\Omega \\
                        \max\{0,X_{ij}\}, & (i,j)\not\in\Omega \\
                      \end{cases}\quad\text{for}\quad i,j=1,2,\dots,m.
   \end{equation*}
 \end{proposition}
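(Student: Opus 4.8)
The plan is to exploit the fact that the Frobenius distance is a sum of squares over the matrix entries, so that projecting onto $C_1$ decouples into a family of independent, low-dimensional minimizations whose solutions can be read off directly.

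First I would record that $C_1$ is closed and convex: it is the intersection of the subspace $S^m$, the convex affine constraints $X_{ij}=D_{ij}$ for $(i,j)\in\Omega$, and the convex nonnegativity constraints $X_{ij}\ge 0$. It is also nonempty, since filling the unknown entries with zeros yields a symmetric nonnegative completion matching $D$ on $\Omega$ (using that $D\ge 0$ and that $\Omega$ and the known entries are symmetric, by assumptions (1) and (3)). Hence, by the single-valuedness of projections onto nonempty closed convex sets noted in Section~\ref{s:DR}, $P_{C_1}$ is single-valued. It therefore suffices to verify that the matrix $Y$ defined by the claimed formula lies in $C_1$ and minimizes $\|X-Y\|_F^2$; uniqueness then forces $Y=P_{C_1}X$.

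Next I would write $\|X-Y\|_F^2=\sum_{i,j}(X_{ij}-Y_{ij})^2$ and regroup the sum over unordered index pairs. The constraints defining $C_1$ couple $Y_{ij}$ with $Y_{ji}$ — through the symmetry requirement $Y_{ij}=Y_{ji}$ together with the matching and nonnegativity conditions, both of which respect the symmetry of $\Omega$ from assumptions (2)--(3) — but they impose no coupling across distinct pairs. The minimization therefore separates into one scalar problem per diagonal entry and one two-variable problem per off-diagonal pair $\{i,j\}$. For each diagonal index, $(i,i)\in\Omega$ forces $Y_{ii}=D_{ii}=0$; for each off-diagonal pair with $(i,j)\in\Omega$, the constraint forces $Y_{ij}=Y_{ji}=D_{ij}$. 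In both cases the contribution to the distance is a fixed constant and the formula is reproduced. For an off-diagonal pair with $(i,j)\notin\Omega$, the only active constraints are $Y_{ij}=Y_{ji}=t$ and $t\ge 0$, and minimizing $(X_{ij}-t)^2+(X_{ji}-t)^2$ over $t\ge 0$ yields the clamped value of $(X_{ij}+X_{ji})/2$.

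The main thing to watch is precisely this symmetry coupling: one cannot minimize naively entry-by-entry, since $Y_{ij}$ and $Y_{ji}$ are tied together. The stated answer $\max\{0,X_{ij}\}$ emerges exactly when $X$ is symmetric, so that $(X_{ij}+X_{ji})/2=X_{ij}$; in the Douglas--Rachford iteration the iterates remain symmetric, so this is the relevant case, and I would either restrict to symmetric $X$ or observe that the averaging above subsumes the symmetrization step. Assembling the entrywise minimizers produces a symmetric, nonnegative matrix agreeing with $D$ on $\Omega$ — hence a member of $C_1$ — that attains the minimal distance, which by single-valuedness is the projection.
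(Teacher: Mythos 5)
Your proof is correct, and its skeleton --- expand $\|X-Y\|_F^2$ as a sum of squares over entries, minimize the free entries, and invoke uniqueness of the projection onto a nonempty closed convex set --- is the same as the paper's. The genuine difference is your explicit treatment of the symmetry constraint. The paper's proof minimizes entry-by-entry: it splits the sum into the blocks $(i,j)\in\Omega$, $(i,j)\notin\Omega$ with $X_{ij}<0$, and $(i,j)\notin\Omega$ with $X_{ij}\geq 0$, bounds each block below by the corresponding contribution of the candidate matrix $P$, and then asserts ``clearly $P\in C_1$.'' That assertion silently uses symmetry of $X$: as your two-variable minimization over the pairs $\{(i,j),(j,i)\}$ shows, for non-symmetric $X$ the unknown entries of the true projection are $\max\{0,(X_{ij}+X_{ji})/2\}$, and the matrix $P$ given by the stated formula is then not symmetric, hence not in $C_1\subseteq S^m$ at all. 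So the proposition's hypothesis ``$X\in\mathbb{R}^{m\times m}$'' is slightly too generous, and your argument both proves the formula in the case that matters --- symmetric $X$, which is all the Douglas--Rachford iteration ever feeds to $P_{C_1}$ when started at a symmetric point, and is exactly the hypothesis under which the companion result, Theorem~\ref{th:PC2}, is stated --- and identifies how to repair it in general. Two further details you handle that the paper glosses over: you verify $C_1\neq\emptyset$ (fill the unknown entries with zeros), and you avoid the paper's small slip in \eqref{e:PC1-1}, where the middle block is written as an equality $\sum(X_{ij}-Y_{ij})^2=\sum X_{ij}^2$ although it is only an inequality $\geq$ (harmless, since the inequality is all that is needed). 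In short: same decomposition idea, but your pairwise grouping closes a gap that the paper's entrywise argument leaves open.
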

 \begin{proof}
  Let $Y$ be any matrix in $C_1$. We have
   \begin{equation}\label{e:PC1-1}
   \begin{split}
     \|X-Y\|^2_F
    &= \sum_{(i,j)\in\Omega}(X_{ij}-Y_{ij})^2+\sum_{\substack{(i,j)\not\in\Omega\\ \text{s.t.~}X_{ij}<0}}(X_{ij}-Y_{ij})^2 + \sum_{\substack{(i,j)\not\in\Omega\\ \text{s.t.~}X_{ij}\geq 0}}(X_{ij}-Y_{ij})^2 \\
 &= \sum_{(i,j)\in\Omega}(X_{ij}-D_{ij})^2 +\sum_{\substack{(i,j)\not\in\Omega\\ \text{s.t.~}X_{ij}<0}}X_{ij}^2 + \sum_{\substack{(i,j)\not\in\Omega\\ \text{s.t.~}X_{ij}\geq 0}}(X_{ij}-Y_{ij})^2.
   \end{split}
   \end{equation}
  Let $P$ be the matrix given by the proposed projection formula (clearly $P\in C_1$). Then
   \begin{equation}\label{e:PC1-2}
    \sum_{\substack{(i,j)\not\in\Omega\\ \text{s.t.~}X_{ij}\geq 0}}(X_{ij}-Y_{ij})^2\geq \sum_{\substack{(i,j)\not\in\Omega\\ \text{s.t.~}X_{ij}\geq 0}}(X_{ij}-X_{ij})^2=\sum_{\substack{(i,j)\not\in\Omega\\ \text{s.t.~}X_{ij}\geq 0}}(X_{ij}-P_{ij})^2.
   \end{equation}
 By combining \eqref{e:PC1-1} and \eqref{e:PC1-2} we see that
  $$\|X-Y\|^2_F\geq \|X-P\|^2_F\text{ for all }Y\in C_1.$$
 Since $C_1$ is closed and convex, $P$ is the unique nearest point to $X$ in $C_1$.
 \qed\end{proof}

 \begin{remark}
  Since $C_1$ is a closed convex set, an alternative (less direct) proof of Proposition~\ref{prop:PC1} can be given using the standard variational characterization of convex projections \cite[Th.~1.2.4]{C12iterative}.
 \end{remark}

 Using the necessary condition given by Theorem~\ref{th:EDMchar}, the non-convex set $C_2$ encodes the \emph{a priori} knowledge that the matrix of interest is a EDM together with the dimension of the space in which the corresponding points generating the matrix are contained. We now derive the projection onto $C_2$.

\begin{theorem}[Nearest low-rank EDMs {\cite{ABT14matrix}}]\label{th:PC2}
  Let $X\in S^m$ be a non-negative, hollow matrix. Then
  $$P_{C_2}(X)=\left\{-Q\begin{bmatrix}
  \widehat Y & d \\
   d^T         & \delta \\
  \end{bmatrix}Q :
   Q(-X)Q=\begin{bmatrix}
   \widehat{X} & d \\
   d^T         & \delta \\
  \end{bmatrix},\,\begin{array}{l}\widehat{X}\in\mathbb{R}^{(m-1)\times(m-1)},\\  d\in\mathbb{R}^{m-1},\hfill \delta\in\mathbb{R},\\ \end{array}\;\widehat{Y}\in P_{M}\widehat X\right\},$$
 where $M$ is the set of positive semi-definite matrices with rank $q$ or less. In particular, $P_{C_2}(X)$ is a singleton if and only if $P_M\widehat X$ is a singleton.
\end{theorem}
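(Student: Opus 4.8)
The plan is to exploit the fact that the map $\Phi\colon X\mapsto Q(-X)Q$ is a self-inverse linear isometry of $(\mathbb{R}^{m\times m},\|\cdot\|_F)$ which preserves $S^m$, and thereby to reduce the projection onto the complicated set $C_2$ to a projection onto a transparently simple transformed set. First I would record the elementary properties of $\Phi$: since the Householder matrix $Q$ is symmetric and orthogonal (so $Q^2=I$), orthogonal conjugation preserves the Frobenius norm and $\Phi(\Phi(X))=Q^2XQ^2=X$; composing with the negation, which is itself an isometry, shows that $\Phi$ is an involutive linear isometry mapping $S^m$ onto $S^m$. The key general principle I would then invoke is that a bijective isometry commutes with metric projection, namely $P_{\Phi(C)}\circ\Phi=\Phi\circ P_C$ for any set $C$: because $\|\Phi(x)-\Phi(c)\|_F=\|x-c\|_F$, the feasible points and their distances are placed in bijection, so nearest points correspond under $\Phi$. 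This identity holds verbatim for \emph{set-valued} projections, so the non-convexity of the rank constraint creates no obstruction.

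Next I would rewrite $C_2$ in the transformed coordinates. By its very definition, $C_2=\{X\in S^m:\Phi(X)\in\widetilde C_2\}=\Phi^{-1}(\widetilde C_2)=\Phi(\widetilde C_2)$, where
\[
 \widetilde C_2=\left\{\begin{bmatrix}\widehat X & d\\ d^T & \delta\end{bmatrix}:\ \widehat X\in M,\ d\in\mathbb{R}^{m-1},\ \delta\in\mathbb{R}\right\}
\]
and $M$ is the set of positive semi-definite matrices of rank at most $q$. The point of this reformulation is that the constraint defining $\widetilde C_2$ involves \emph{only} the leading $(m-1)\times(m-1)$ block, leaving the border $d$ and corner $\delta$ entirely free.

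Then I would compute $P_{\widetilde C_2}$ directly. Writing $\Phi(X)=Q(-X)Q=\begin{bmatrix}\widehat X & d\\ d^T & \delta\end{bmatrix}$, which is symmetric since $X\in S^m$, the squared Frobenius distance from this matrix to a generic element $\begin{bmatrix}\widehat Y & d'\\ (d')^T & \delta'\end{bmatrix}$ of $\widetilde C_2$ decomposes blockwise as $\|\widehat X-\widehat Y\|_F^2+2\|d-d'\|^2+(\delta-\delta')^2$. Because $d'$ and $\delta'$ are unconstrained, the infimum is attained exactly when $d'=d$ and $\delta'=\delta$, whereupon it reduces to minimizing $\|\widehat X-\widehat Y\|_F^2$ over $\widehat Y\in M$, that is $\widehat Y\in P_M\widehat X$. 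Combining this with the commutation identity yields
\[
 P_{C_2}(X)=\Phi\bigl(P_{\widetilde C_2}(\Phi(X))\bigr)=\left\{-Q\begin{bmatrix}\widehat Y & d\\ d^T & \delta\end{bmatrix}Q:\ \widehat Y\in P_M\widehat X\right\},
\]
which is the asserted formula. The singleton claim is then immediate: since $\Phi$ is a bijection, the displayed parametrization is injective in $\widehat Y$, so $P_{C_2}(X)$ is a singleton precisely when $P_M\widehat X$ is.

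I expect the only delicate points to be bookkeeping rather than conceptual: correctly accounting for the factor of $2$ produced by the two symmetric off-diagonal blocks (which is immaterial to the minimizer, since those blocks are free), and noting that projecting the symmetric matrix $\widehat X$ onto the symmetric set $M$ gives the same answer whether one works in $S^{m-1}$ or in all of $\mathbb{R}^{(m-1)\times(m-1)}$. Observe also that only the symmetry of $X$ is actually used, the hollowness and non-negativity serving merely to describe the matrices arising in the application. The genuinely hard analytic content — the computation of $P_M$, the nearest positive semi-definite matrix of bounded rank — is deliberately encapsulated in the symbol $P_M$ and lies outside this statement.
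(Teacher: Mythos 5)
Your proposal is correct and is essentially the paper's own argument: the paper likewise uses the orthogonality of $Q$ to write $\|X-Y\|_F^2=\|Q(-X)Q-Q(-Y)Q\|_F^2=\|\widehat X-\widehat Y\|_F^2+2\|d-c\|^2+|\delta-\beta|^2$ for arbitrary $Y\in C_2$ and then minimizes blockwise, noting the border and corner are free. The only difference is organizational --- you package the isometry step as a general ``bijective isometries commute with (set-valued) projections'' lemma applied to the involution $X\mapsto Q(-X)Q$, whereas the paper inlines that computation directly; the mathematical content is identical.
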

\begin{proof}
  Let $Y$ be any matrix in $C_2$. That is,
   $$Y=\begin{bmatrix}
          \widehat{Y} & c \\
          c^T         & \beta \\
         \end{bmatrix},\quad\text{for some }c\in\mathbb{R}^{m-1},\,\beta\in\mathbb{R},\,\widehat{Y}\in S.$$
Using the orthogonality of $Q$, we compute
   \begin{equation}\label{e:t-PC2}
   \begin{split}
    \|X-Y\|^2_F
    &= \|Q(X-Y)Q\|^2_F = \|Q(-X)Q-Q(-Y)Q\|^2_F \\
    &= \left\|
         \begin{bmatrix}
          \widehat{X} & d \\
          d^T         & \delta \\
         \end{bmatrix} -
         \begin{bmatrix}
          \widehat{Y} & c \\
          c^T         & \beta \\
         \end{bmatrix}
       \right\|^2_F
     = \left\|
         \begin{bmatrix}
          \widehat{X}-\widehat{Y} & (d-c) \\
          (d-c)^T         & (\delta-\beta) \\
         \end{bmatrix}
       \right\|^2_F \\
    &= \|\widehat{X}-\widehat{Y}\|^2_F+2\|d-c\|^2+|\gamma-\beta|^2.
   \end{split}
   \end{equation}
  To complete the proof we observe that \eqref{e:t-PC2} is minimized if and only if $c=d,\gamma=\beta$ and $\widehat{Y}\in P_M\widehat{X}$.
\qed\end{proof}

The set $M$ in Theorem~\ref{th:PC2} is a set of low-rank positive semi-definite matrices. 
One method to compute its projection (and the one we will use) is by exploiting the eigen-decomposition of $\widehat{X}$. Denote by $\diag(\lambda)$ the diagonal matrix given by placing the elements of the vector $\lambda\in\mathbb{R}^m$ along the main diagonal. Let $\widehat{X}=U\diag(\lambda)U^T$ be an eigen-decomposition (of $\widehat{X}$) with
 $$\lambda_1\geq \lambda_2\geq \dots\geq \lambda_q^+\geq \dots\geq \lambda_m.$$
A projection onto the set is then given by
 $$U\diag((\lambda_1^+,\lambda_2^+,\dots,\lambda_q^+,0\dots,0,0))U^T,$$
where $x^+$ denotes $\max\{0,x\}$.

\section{Computational Experiments}\label{s:results}
We apply the formulation of Section~\ref{s:protein} to six proteins, shown in Table~\ref{tab:proteinInfo}, obtained from the \emph{RCSB Protein Data Bank}\footnote{RCSB Protein Data Bank:~\url{www.rcsb.org/pdb}}. As part of \cite{ABT14matrix}, reconstructions of the same six proteins were attempted using a partial EDM containing only distances less than $6$\angstrom. Here we attempt reconstructions using partial EDMs which, in addition to these short-range distances, incorporate other \emph{a priori} information. In particular, we include  inter-atomic distances greater than $6$\angstrom{} for atoms from within the same residue in the partial EDM. This is reasonable since the structure of the individual residues is known. For 1PTQ, this information gives approximately a further $0.2\%$ of the total non-zero inter-atomic distances.

\begin{table}
\caption{Number of atoms, residues, known, and total non-zero inter-atomic distances in our six test proteins.}\label{tab:proteinInfo}
\begin{tabular*}{\linewidth}{@{\extracolsep{\fill}}ccccc}
  \hline\noalign{\smallskip}
  Protein  & Atoms  & Residues  & Total Non-Zero Distances  & Known Non-Zero Distances \\
  \noalign{\smallskip}\hline\noalign{\smallskip}
  1PTQ     & \hspace{1ex}404              & \hspace{1ex}50  & \hspace{1ex}81,406    & 8.9207\% \\
  1HOE     & \hspace{1ex}581              & \hspace{1ex}74  & 168,490               & 6.4105\%  \\
  1LFB     & \hspace{1ex}641              & \hspace{1ex}99  & 205,120               & 5.6362\%  \\
  1PHT     & \hspace{1ex}988              & \hspace{1ex}85  & 236,328               & 4.6501\%  \\
  1POA     & 1067                         & 118             & 568,711               & 3.6375\%  \\
  1AX8     & 1074                         & 146             & 576,201               & 3.5606\%  \\
  \noalign{\smallskip}\hline\noalign{\smallskip}
\end{tabular*}
\end{table}

Our experiments were implemented in \emph{Cython} and performed on a machine having an Intel Xeon E5540 $@$ $2.83$GHz running Red Hat Enterprise Linux 6.5. A combination of the Cython platform, and optimized code gave approximately a ten-fold speed up compared to \cite{ABT14matrix}. This allowed for a greater number of iterations to be performed and hence the use of the more robust (albeit still heuristic) stopping criterion given in Algorithm~\ref{alg:basicDR} as opposed to simply performing a fixed number of iterations. The reconstructed EDM, $x$, was converted to points $z_1,z_2,\dots,z_m\in\mathbb{R}^3$ using Algorithm~\ref{alg:EDM2pts}.

\begin{figure}
\caption{Conversion of EDM to points in $\mathbb{R}^q$.}\label{alg:EDM2pts}
\begin{svgraybox}
 \begin{algorithm}[H]
  \KwIn{$x\in X$ \tcc*{a Euclidean distance matrix}}
  $L=I-ee^T/n$ where $e=(1,1,\dots,1)^T$\;
  $\tau = -LDL/2$\;
  $USV^T=\operatorname{SingularValueDecomposition}{(\tau)}$\;
  $Z=$ first $q$ columns of $U\sqrt{S}$\;
  $z_i=i$th row of $Z$ for $i=1,2,\dots,m$\;
  \KwOut{$z_1,z_2,\dots,z_q\in\mathbb{R}^q$ \tcc*{points corresponding to $x$}}
 \end{algorithm}
\end{svgraybox}
\end{figure}

\begin{remark}
 It is worth emphasing that our primary concern is the quality of the reconstruction, rather than the time required to perform the reconstruction. This is because, if done well, one only needs to determine the conformation once.
\end{remark}

 We report two error metrics, which we now explain. Denote the actual EDM by $x^{\text{actual}}$. The first error metric is a measure of the \emph{error in the reconstructed EDM}, and is given by
 \begin{equation*}
  \text{EDM-error}=\|x^{\text{actual}}-x\|_F=\sqrt{\sum_{i,j=1}^m\left|x^{\text{actual}}_{ij}-x_{ij}\right|^2}.
 \end{equation*}
 
 Denote the actual atom positions by $z^{\text{actual}}_1,z^{\text{actual}}_2,\dots,z^{\text{actual}}_m\in\mathbb{R}^3$. The second error metric measures the \emph{error in the reconstructed atom positions} $z_1,z_2,\dots,z_m\in\mathbb{R}^3$. Since EDMs are invariant under translation, reflection, and rotation of the points by which they are induced, we first perform a \emph{Procrustes analysis} \cite{D05} to obtain $\widetilde{z_1},\widetilde{z_2},\dots,\widetilde{z_m}\in\mathbb{R}^3$. These points are a best fit of the reconstructed points when the aforementioned transformations are allowed.
 The second error metric is  given by
 \begin{equation*}
  \text{Position-error}=\sqrt{\sum_{k=1}^m\|z^{\text{actual}}_k-\widetilde{z_k}\|^2_2}.
 \end{equation*}
When comparing the relative size of these two errors, it is worth noting that the summation in the EDM-error contains $m^2$ terms whereas the summation in the position-error contains only $3m$.

\begin{remark}[Decibel error]
 It is also common to consider the relative error in \emph{decibels (dB)}, as was reported in \cite{ABT14matrix}. That is,
  $$\text{Relative error (dB)}=10\log_{10}\left(\frac{\|P_BR_Ax-P_Ax\|_F^2}{\|P_Ax\|_F^2}\right).$$
 In this study the relative error in decibels is not reported. This is unnecessary because the stopping criterion used in Algorithm~\ref{alg:basicDR} is equivalent to requiring that the decibel error be less than $10\log_{10}(\epsilon^2)$. Requiring that $\epsilon=10^{-5}$ corresponds to aiming at a relative error of $-100$dB.
\end{remark}

\begin{remark}[Stopping criterion and tolerance]\label{rem:stopping}
 In the computational experiments that follow, the stopping tolerance is taken to be $\epsilon=10^{-5}$. We now provide some justification for this choice.
   \begin{figure}
    \centering
    \includegraphics[width=0.7\linewidth]{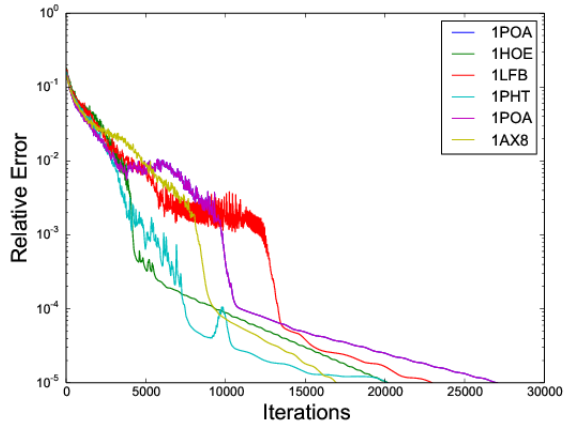}
    \caption{The relative error as a function of iterations (vertical axis is logarithmic).}\label{fig:stopping}
\end{figure}

   For each of the six proteins, Figure~\ref{fig:stopping} shows the relative error as a function of the number of iterations starting from a given initial point for the Douglas--Rachford method. 
   
   \begin{itemize}
     \item   When the number of iteration is less than $5000$ the relative error exhibits non-monotone oscillatory behaviour --- which seems to provide much of the potency of the method. It seems to allow the reflection method to sample regions and avoid settling at an inferior local minimum of the configuration space. In \cite{ABT14matrix} we observed that the alternating projection method, which is monotonic, fails to produce good reconstructions.
 
     \item    When the relative error is between $10^{-3}$ and $10^{-4}$, it decreases sharply after which a period of more predictable decrease is observed. 
         
 \item  Beyond this point slower progress is made. We therefore choose our stopping tolerance to be $\epsilon=10^{-5}$ so that the algorithm will terminate in this region.
   \end{itemize}

  The change in successive iterates was found to also exhibit similar behavior (not shown), so is  another suitable candidate for a stopping criterion.

\end{remark}

It is worth noting that there are many other techniques for solving (variants of) the protein conformation problem (see for instance \cite{BetaMDGP}).  Such a discussion, however, is beyond the scope of this chapter.

\subsection{Basic Douglas--Rachford Algorithm Results}\label{ss:resultsBasic}
Table~\ref{tab:basicDRrecon} gives results for the basic Douglas--Rachford algorithm presented in Algorithm~\ref{alg:basicDR}. We make some comments regarding these results.

The EDM-error increases with increasing problem size; yet the same trend is not observed for the position-error for which 1PHT reported the largest error. For all of the proteins studied, the differences between the average and worst case results for the position-errors were small. This strongly suggests that the method can consistently produce a EDM which gives the desired atomic positions.

The second column of Figure~\ref{fig:recons} shows the conformation of the basic Douglas--Rachford reconstructions, which are visually indistinguishable from the actual conformation shown in the first column. This is an improvement from what was reported in \cite{ABT14matrix} whose Douglas--Rachford reconstructions of two of the larger proteins, 1POA and 1AX8, gave unrealistic conformations consisting of disjoint blocks of atoms. In light of Remark~\ref{rem:stopping} it is likely that this was due to premature algorithm termination.

\begin{table}
\caption{Average (worst) results from five random replications of the basic Douglas--Rachford algorithm with $\epsilon=10^{-5}$.}\label{tab:basicDRrecon}
\begin{tabular*}{\linewidth}{@{\extracolsep{\fill}}lp{0.2cm}*{2}{d{4}d{5}p{0.2cm}}d{1}d{0}p{0.2cm}d{2}d{3}}
  \hline\noalign{\smallskip}
  Protein  && \multicolumn{2}{c}{EDM-Error} && \multicolumn{2}{c}{Position-Error} && \multicolumn{2}{c}{Iterations} && \multicolumn{2}{c}{Time (h)} \\
  \noalign{\smallskip}\hline\noalign{\smallskip}
  1PTQ     &&  3.6816 & (4.0938)  &&  0.1307 &  (0.1457) &&  4339.6 &  (4686) &&  0.28 &  (0.30) \\
  1HOE     &&  9.7475 & (13.8503) &&  0.1781 &  (0.2636) && 20794.4 & (21776) &&  3.50 &  (3.67) \\
  1LFB     &&  9.8728 & (17.2860) &&  1.1388 &  (2.1177) && 22346.2 & (23295) &&  4.64 &  (4.85) \\
  1PHT     && 10.3709 & (12.9557) && 12.8782 & (13.0056) && 20103.0 & (20251) && 13.90 & (14.00) \\
  1POA     && 25.4225 & (46.5804) &&  0.5844 &  (1.1639) && 28426.0 & (29766) && 23.33 & (24.47) \\
  1AX8     && 25.7369 & (39.4586) &&  0.6592 &  (0.9160) && 17969.8 & (19059) && 15.04 & (15.95) \\
  \noalign{\smallskip}\hline\noalign{\smallskip}
\end{tabular*}
\end{table}

\begin{figure}
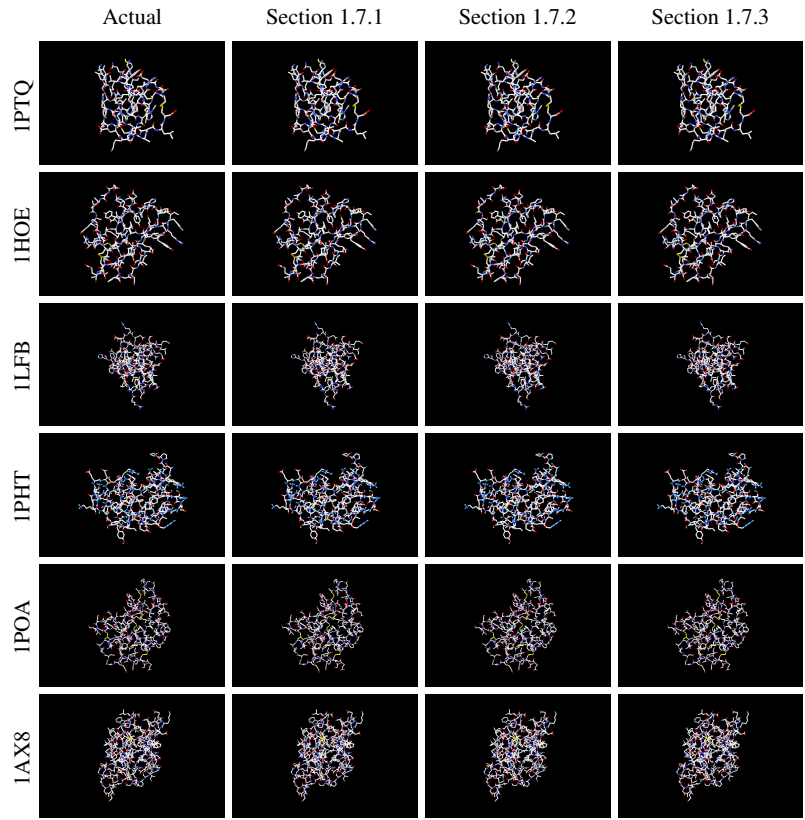

 \centering
 \begin{tabular}{p{0.02\linewidth}*{4}{m{0.21\linewidth}}}
  & \multicolumn{1}{c}{Actual} & \multicolumn{1}{c}{Section~\ref{ss:resultsBasic}} & \multicolumn{1}{c}{Section~\ref{ss:resultsLazy}} & \multicolumn{1}{c}{Section~\ref{ss:resultsTen}} \\[1ex]
  \rotatebox[origin=c]{90}{1PTQ}
  & \includegraphics[width=\linewidth]{1PTQ}
  &  \includegraphics[width=\linewidth]{1PTQrecon_000}
  &  \includegraphics[width=\linewidth]{1PTQlazyrecon_000}
  &  \includegraphics[width=\linewidth]{1PTQtenrecon_000}  \\
  \rotatebox[origin=c]{90}{1HOE}
  & \includegraphics[width=\linewidth]{1HOE}
  &  \includegraphics[width=\linewidth]{1HOErecon_000}
  &  \includegraphics[width=\linewidth]{1HOElazyrecon_000}
  &  \includegraphics[width=\linewidth]{1HOEtenrecon_000}  \\
  \rotatebox[origin=c]{90}{1LFB}
  & \includegraphics[width=\linewidth]{1LFB}
  &  \includegraphics[width=\linewidth]{1LFBrecon_000}
  & \includegraphics[width=\linewidth]{1LFBlazyrecon_001}
  &  \includegraphics[width=\linewidth]{1LFBtenrecon_000}  \\
  \rotatebox[origin=c]{90}{1PHT}
  & \includegraphics[width=\linewidth]{1PHT}
  &  \includegraphics[width=\linewidth]{1PHTrecon_000}
  &  \includegraphics[width=\linewidth]{1PHTlazyrecon_000}
  &  \includegraphics[width=\linewidth]{1PHTtenrecon_000}  \\
  \rotatebox[origin=c]{90}{1POA}
  & \includegraphics[width=\linewidth]{1POA}
  &  \includegraphics[width=\linewidth]{1POArecon_000}
  &  \includegraphics[width=\linewidth]{1POAlazyrecon_001}
  &  \includegraphics[width=\linewidth]{1POAtenrecon_000}  \\
  \rotatebox[origin=c]{90}{1AX8}
  & \includegraphics[width=\linewidth]{1AX8}
  &  \includegraphics[width=\linewidth]{1AX8recon_000}
  &  \includegraphics[width=\linewidth]{1AX8lazyrecon_000}
  &  \includegraphics[width=\linewidth]{1AX8tenrecon_000}  \\
 \end{tabular}
 \caption{The conformations of the six proteins, and their three Douglas--Rachford reconstructions.}\label{fig:recons}
\end{figure}

\subsection{Douglas--Rachford Algorithm with Periodic Rank Projections}\label{ss:resultsLazy}
In our formulation of the protein confirmation problem, the most expensive step is the computation of the projection onto the rank constraint $C_2$. Thus requires the eigen-decomposition of a $(m-1)\times(m-1)$ symmetric matrix. In this section we propose problem specific heuristics which allow for this computation to sometimes be avoided.

 One idea to avoid performing the eigen-decomposition is to not update the sequence $(r_n)_{n=1}^\infty$ in Algorithm~\ref{alg:basicDR} at every iteration but only periodically. This approach is described in Algorithm~\ref{alg:periodicPC2}, and results, with updates only every third time, in Table~\ref{tab:lazyResults}.

 We now compare the results of this section to those of Section~\ref{ss:resultsBasic}. A small increase in the position-errors, and a larger increase in the EDM-errors was observed. The number of iterations required also increased, with this number almost doubling for 1PTQ. For all six test proteins, the total time required was less. The biggest improvement was 1POA whose total time was more than halved. The quality of the reconstructed conformations seem not to be adversely effected by the use of periodic rank projections, as can be seen in Figure~\ref{fig:recons}.

\begin{figure}
\caption{The Douglas--Rachford algorithm with $T$-periodic projections onto the set $B$.}\label{alg:lazyPC2}
\begin{svgraybox}
 \begin{algorithm}[H]\label{alg:periodicPC2}
  \KwIn{$x_0\in X, T\in\mathbb{N}$ and $\epsilon>0$}
  $n=0$\;
  $p_0 \in P_{A}(x_0)$\;
  \While{$n=0${\rm\bf~or~}$\|r_n-p_n\|>\epsilon\|p_n\|$}{
   \eIf{$n\bmod{T}=0$}{
     $r_n \in P_{B}(2p_n-x_n)$\;
   }{
     $r_n = r_{n-1}$\;
   }
   $x_{n+1} = x_n+r_n-p_{n}$\;
   $p_{n+1} \in P_{A}(x_{n+1})$\;
   $n = n+1$\;
  }
  \KwOut{$p_n\in X$}
 \end{algorithm}
\end{svgraybox}
\end{figure}

\begin{table}
\caption{Average (worst) results from five random replications of the Douglas--Rachford algorithm with periodic rank projections with $T=3$ and $\epsilon=10^{-5}$.}\label{tab:lazyResults}
\begin{tabular*}{\linewidth}{@{\extracolsep{\fill}}lp{0.2cm}*{2}{d{4}d{5}p{0.2cm}}d{1}d{0}p{0.2cm}d{2}d{3}}
  \hline\noalign{\smallskip}
  Protein  && \multicolumn{2}{c}{EDM-Error} && \multicolumn{2}{c}{Position-Error} && \multicolumn{2}{c}{Iterations} && \multicolumn{2}{c}{Time (h)} \\
  \noalign{\smallskip}\hline\noalign{\smallskip}
  1PTQ     &&  4.3709 &  (4.7200) &&  0.1919 &  (0.2240) &&  7160.6 &  (7595) && 0.16 & (0.17) \\
  1HOE     &&  10.1790 &  (12.1089) &&  0.2603 &  (0.2933) && 20305.4 & (22550) &&  1.21 & (1.35)  \\
  1LFB     &&  17.6532 &  (19.0984) &&  1.2709 &  (1.7243) && 28983.8 & (31211) &&  2.15 & (2.31)  \\
  1PHT     &&  23.8594 &  (25.9794) && 13.1358 & (13.2805) && 20559.2 & (20981) &&  5.03 & (5.13) \\
  1POA     &&  49.8406 &  (51.3411) &&  1.0948 &  (1.2084) && 33150.8 & (39083) &&  9.55 & (11.25)\\
  1AX8     &&  45.5203 &  (49.1866) &&  1.1696 &  (1.4482) && 27080.6 & (31250) &&  7.96 & (9.20) \\
  \noalign{\smallskip}\hline\noalign{\smallskip}
\end{tabular*}
\end{table}

\subsection{Reconstructions with Additional Distance Data}\label{ss:resultsTen}
In Sections~\ref{ss:resultsBasic} \& \ref{ss:resultsLazy} we considered the physically realistic setting in which distances below the threshold of $6$\angstrom{} were known. As noted, when the number of atoms in a protein increases, the proportion of inter-atomic distances below this threshold compared to the total number of (non-zero) distances decreases. 

To better understand the Douglas--Rachford method applied to larger problem instances, we performed the same reconstruction as in Section~\ref{ss:resultsBasic} but with the percentage of known non-zero distances constant. More precisely, we assumed that the smallest $10\%$ of inter-atomic distances were known.

\begin{table}
\caption{Average (worst) results from five random replications of the basic Douglas--Rachford algorithm from the smallest $10\%$ of inter-atomic distances with $\epsilon=10^{-5}$.} \label{tab:additionalDRrecon}
\begin{tabular*}{\linewidth}{@{\extracolsep{\fill}}lp{0.2cm}*{2}{d{4}d{5}p{0.2cm}}d{1}d{0}p{0.2cm}d{2}d{3}}
  \hline\noalign{\smallskip}
  Protein  && \multicolumn{2}{c}{EDM-Error} && \multicolumn{2}{c}{Position-Error} && \multicolumn{2}{c}{Iterations} && \multicolumn{2}{c}{Time (h)} \\
  \noalign{\smallskip}\hline\noalign{\smallskip}
  1PTQ     &&  3.1924 &  (3.5936) &&  0.0963 &  (0.1213) &&  4014.4 &  (4184) && 0.26 & (0.27) \\
  1HOE     &&  8.0905 & (10.4357) &&  0.0960 &  (0.1265) && 15110.4 & (15709) && 2.54 & (2.64) \\
  1LFB     &&  7.2941 & (13.9893) &&  0.4647 &  (0.9182) && 11060.6 & (11912) && 2.29 & (2.46) \\
  1PHT     && 14.1302 & (20.2476) &&  0.3542 &  (0.4326) &&  6071.0 &  (6512) && 4.19 & (4.49) \\
  1POA     && 19.5619 & (31.1987) &&  0.1624 &  (0.2665) && 11555.8 & (13244) && 9.44 & (10.81) \\
  1AX8     && 14.0747 & (29.7259) &&  0.0940 &  (0.1922) && 10099.2 & (11125) && 8.38 & (9.23) \\
  \noalign{\smallskip}\hline\noalign{\smallskip}
\end{tabular*}
\end{table}

As could perhaps be predicted, when more distance information is incorporated the error metrics, and the number of iterations decrease. Problem size and EDM-error do not correlate as strongly compared to the results of Section~\ref{ss:resultsBasic}. However, the general trend that larger problem sizes give larger EDM-errors is still observed. The most notable improvement, when compared to Section~\ref{ss:resultsBasic}, is the position-error for 1PHT. This suggests that in the realistic setting of Section~\ref{ss:resultsBasic} the underlying protein's conformation ({\em e.g.,} a compact or a dispersed conformation) is an important factor in the difficulty of the reconstruction problem.

\subsection{Ionic Liquid Bulk Structure Determination}
Ionic liquids (ILs) are salts ({\em i.e.,} they are comprised of positively and negatively charged ions) having low melting points, typically occupying the liquid state at room temperature. An analogous  reconstruction problem arising in the context of ionic liquid chemistry is to determine a given ionic liquid's \emph{bulk structure}. That is, the configuration of its ions with respect to each other (the structure of the individual ions is known).

In this section, we applied the Douglas--Rachford method to a simplified version of this problem. Entries of the partial EDM are assumed to be known whenever the two atoms are bonded ({\em i.e.,} when their \emph{Van der Waals radii} taken from \cite{vdw} overlap).

Table~\ref{tab:ILs} reports results for a \emph{propylammonium nitrate (PAN)} data set consisting of 180 atoms. The corresponding rank-$3$ EDM completion problem has a total of 32,220 non-zero inter-atomic distances of which 5.95\% form the partial EDM.

\begin{table}
\caption{Average (worst) results from five random replications of the basic Douglas--Rachford algorithm, applied to ionic liquid bulk structure determination, with $\epsilon=10^{-5}$.}\label{tab:ILs}
\begin{tabular*}{\linewidth}{@{\extracolsep{\fill}}p{0cm}*{2}{d{4}d{5}p{0.2cm}}d{1}d{0}p{0.2cm}d{2}d{3}p{0cm}}
  \hline\noalign{\smallskip}
  &\multicolumn{2}{c}{EDM-Error} && \multicolumn{2}{c}{Position-Error} && \multicolumn{2}{c}{Iterations} && \multicolumn{2}{c}{Time (h)} & \\ 
  \noalign{\smallskip}\hline\noalign{\smallskip}
 & 0.6323 & (0.6918) && 2.0374 & (2.5039) && 41553.2 & (82062) && 0.22 & (0.43) & \\
  \noalign{\smallskip}\hline\noalign{\smallskip}
\end{tabular*}
\end{table}

As was the case in the protein conformation application, the difference between the average and worst case results for the two error metrics is observed to be small. The actual conformation of PAN, and its Douglas--Rachford reconstruction are shown in Figure~\ref{fig:ILrecon}. A high degree of visual coincidence is observed, although a small amount of the finer detail is missing.

\begin{figure}
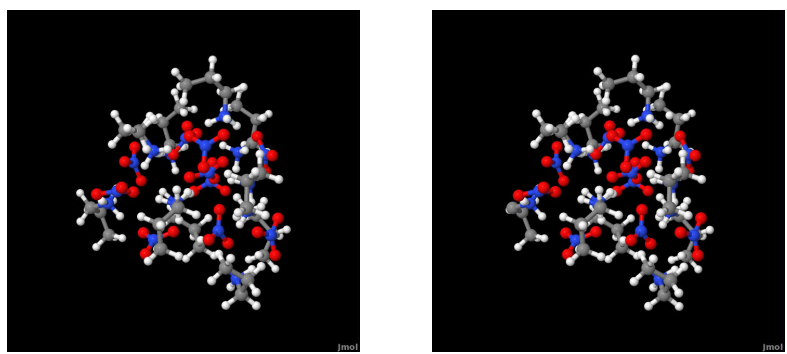

 \centering
 \includegraphics[width=0.4\linewidth]{Kick3837final} \hspace{0.07\linewidth}
 \includegraphics[width=0.4\linewidth]{Kick3837finalrecon_000}
 \caption{The actual conformation (left) and Douglas--Rachford reconstruction (right) of PAN. Note the two poorly reconstructed hydrogen atoms (white) in the left configuration.}\label{fig:ILrecon}
\end{figure}

\section{Concluding Remarks}\label{s:conclusion}
We have shown that the Douglas--Rachford reflection method can successfully solve the protein conformation determination problem by directly addressing a non-convex matrix completion problem. This is also the case for an analogous ionic liquid bulk structure determination problem. It is worth emphasising again that the current literature provides no theoretical justification for the method to work at all, let alone so well. Modifications of the method have also been shown to reduce computational times without significantly effecting the quality of the results. This promising demonstration of the method begs further attention, both in improving theoretical understanding, and in the refinement and investigation of these and further applications.

\paragraph{{\bf Acknowledgements.}~The authors wish to thank Dr Alister Page for introducing us to the bulk structure determination problem, and for kindly sharing the PAN data set. The work of JMB is supported, in part, by the Australian Research Council. The work of MKT is supported, in part, by an Australian Postgraduate Award.}


\end{document}